\def\[#1\]{\begin{align}#1\end{align}}
\def\(#1\){\begin{align*}#1\end{align*}}
\def\argmax{\operatornamewithlimits{arg\,max}}
\def\argmin{\operatornamewithlimits{arg\,min}}
\newcommand{\bprf}{\begin{proof}}
\newcommand{\eprf}{\end{proof}}
\newcommand{\blem}{\begin{lemma}}
\newcommand{\elem}{\end{lemma}}
\newcommand{\bE}{\mathbb{E}}
\newtheoremstyle{break}
  {\topsep}{\topsep}%
  {\itshape}{}%
  {\bfseries}{}%
  {\newline}{}%
\newcommand{\TV}{\mathsf{TV}}
\newcommand{\tTV}{\widetilde{\mathsf{TV}}}
\newcommand{\bR}{\mathbb{R}}
\newcommand{\GG}{\mathcal{G}}
\newtheorem{theorem}{Theorem}
\newtheorem{lemma}{\textbf{Lemma}}
\begin{document}
\title{When does the Tukey Median work? } 

% %%% Single author, or several authors with same affiliation:
% \author{%
%   \IEEEauthorblockN{Stefan M.~Moser}
%   \IEEEauthorblockA{ETH Zürich\\
%                     ISI (D-ITET)\\
%                     CH-8092 Zürich, Switzerland\\
%                     Email: moser@isi.ee.ethz.ch}
% }

%%% Several authors with up to three affiliations:
\author{Banghua Zhu, Jiantao Jiao, Jacob Steinhardt\thanks{Banghua Zhu is with the Department of Electrical Engineering and Computer Sciences, University of California, Berkeley. Jiantao Jiao is with the Department of Electrical Engineering and Computer Sciences and the Department of Statistics, University of California, Berkeley. Jacob Steinhardt is with the Department of Statistics and the Department of Electrical Engineering and Computer Sciences, University of California, Berkeley. Email: \{banghua, jiantao,jsteinhardt\}@berkeley.edu.}}
\date{\today}

\maketitle

%%%%%%
%% Abstract: 
%% If your paper is eligible for the student paper award, please add
%% the comment "THIS PAPER IS ELIGIBLE FOR THE STUDENT PAPER
%% AWARD." as a first line in the abstract. 
%% For the final version of the accepted paper, please do not forget
%% to remove this comment!
%%
\begin{abstract}
% "THIS PAPER IS ELIGIBLE FOR THE STUDENT PAPER AWARD." 
  We analyze the performance of the Tukey median estimator under total variation ($\TV$) distance corruptions. Previous results  show that under Huber's additive corruption model, the breakdown point is $1/3$ for high-dimensional halfspace-symmetric distributions. We show that under $\TV$ corruptions, the breakdown point reduces to $1/4$ for the same set of distributions. We also show that a certain projection algorithm can attain the optimal breakdown point of $1/2$.
  %the algorithm projecting under the halfspace metric to the set of halfspace-symmetric distributions can  %proposed in~\citet{zhu2019generalized}, \jt{This distance was not proposed by us. Even in Donoho 1982 I think he already has this distance. It was also explicitly used in Donoho and Liu 1988. However, this algorithm is new since nobody considered projection under this distance to the family of half-space symmetric distributions before us. Chao also used this distance. We need to explicitly state our contributions. } BH: I changed the sentence to propose new algorithm instead of 
  % \jt{You never defined what is $\tTV$ in the abstract. Note that in Donoho and Liu it is called halfspace metric and it was already proposed in Donoho 1982}
  %improve the breakdown point to $1/2$. 
  Both the Tukey median estimator and the projection algorithm achieve sample complexity linear in dimension. 
\end{abstract}

\tableofcontents
\newpage
%% The paper must be self-contained. However, if you are referring to
%% a full version for checking certain proofs, please provide the
%% publically accessible location below.  If the paper is completely
%% self-contained, you can remove the following line from your
%% submission.
%\textit{A full version of this paper is accessible at:}
% \url{https://2020.ieee-isit.org/} 

\section{Introduction}

% \js{General note: proofread for proper use of the word ``the''; it is missing in many places throughout the text.}

% \js{Should it be ``$\TV$ corruption'' or ``$\TV$ corruptions''? The latter sounds more natural to me. Should in any case be consistent.} BH: Done. I use TV corruptions and TV corruption model now.

% \js{the intro does little to convey why I should care about these results; it also doesn't get to anything specific about what we do until the third or fourth paragraph; should try to say earlier what the paper is going to be about}

% \js{General comment: should look for ways to simplify the math, especially for theorem statements. I pointed out a couple places in the text but you should go through the text to look for other places too.}

The Tukey median is the point(s) with largest Tukey depth~\citep{tukey1975mathematics}; it is a generalization of the one-dimensional median to high dimensions (see \eqref{eq.def_Tukey} 
for a formal definition). 
%In one dimension, Tukey median reduces to median.
%
%\jt{I commented out below since I don't think guarantee small estimation error is proved or true}
%, which is known to guarantee small estimation error in the presence of adversary corruption with breakdown point $1/2$. In high dimension, 
%Tukey median is , and maintains the property of being robust to adversary perturbation.  
% \jt{You started this paper with terminology that people do not know. You should start saying things like Tukey median is a generalization of median, and in one dimension is exactly the median. Ideally you want Tukey median to inherit the good properties of median which is robust to corruptions. }
%
Its behavior is well-understood under 
the additive, or Huber, corruption model~\citep{huber1973robust} in which 
an $\epsilon$-fraction of the data are 
arbitrary outliers.
%Previous studies on the Tukey median~\citep{donoho1982breakdown, donoho1992breakdown, chen2002influence, chen2018robust} pinned down its maximum bias, breakdown point,  influence function and finite sample convergence rate under the additive, or Huber, corruption model~\citep{huber1973robust} for halfspace-symmetric distributions. 
It is first shown in~\citet{donoho1982breakdown, donoho1992breakdown} that the breakdown point for Tukey median is % \jt{able to achieve or exactly this value? } 
$1/3$ for halfspace-symmetric distributions in dimension $d\geq 2$, and the breakdown point is $1/(d+1)$ without the halfspace-symmetric assumption. Further analyses in~\citet{chen2002influence} quantify the influence function and maximum bias for  halfspace-symmetric distributions, and the finite-sample behavior for elliptical distributions is analyzed in~\citet{chen2018robust}. % under additive corruption model. 
% \js{TODO: say which ones of these give maximum bias and influence function; I deleted the original sentence since it's redundant with explaining the specific refs}

In this paper, we consider the stronger $\TV$ corruption model, which allows both adding and deleting mass from the original distribution. 
We quantify the maximum bias of Tukey median and provide both upper and lower bounds for the breakdown point under $\TV$ corruptions. 
Interestingly, the breakdown point for halfspace-symmetric distributions in high dimensions decreases from $1/3$ under additive corruptions to 
$1/4$ under $\TV$ corruptions. We show that a different algorithm,  projection under the halfspace metric, has breakdown point $1/2$ in the same setting, which is the maximum breakdown point any translation-equivariant estimator can achieve~\cite[Equation 1.38]{rousseeuw2005robust}. We summarize the breakdown point for different algorithms in Figure~\ref{fig:breakdown}.

We extend the population results on maximum bias and breakdown point under $\TV$ corruptions % \js{which results do you extend? the breakdown point? also fix grammar of this sentence} BH: Done.
to the finite-sample case, showing that we approach the infinite-data limit within a 
constant factor once the number of samples $n$ is linear in $d$. Our analysis holds under 
both the oblivious and adaptive models
considered in the literature~\citep{zhu2019generalized}. % \js{might need to edit this a bit to make it correct; but defining the models here was overkill. if the point is that we do something under adaptive that Chao did under oblivious, should just say that directly, it wasn't clear from the previous text}

\begin{figure}[!htbp]
    \centering
    \includegraphics[width=0.7\linewidth]{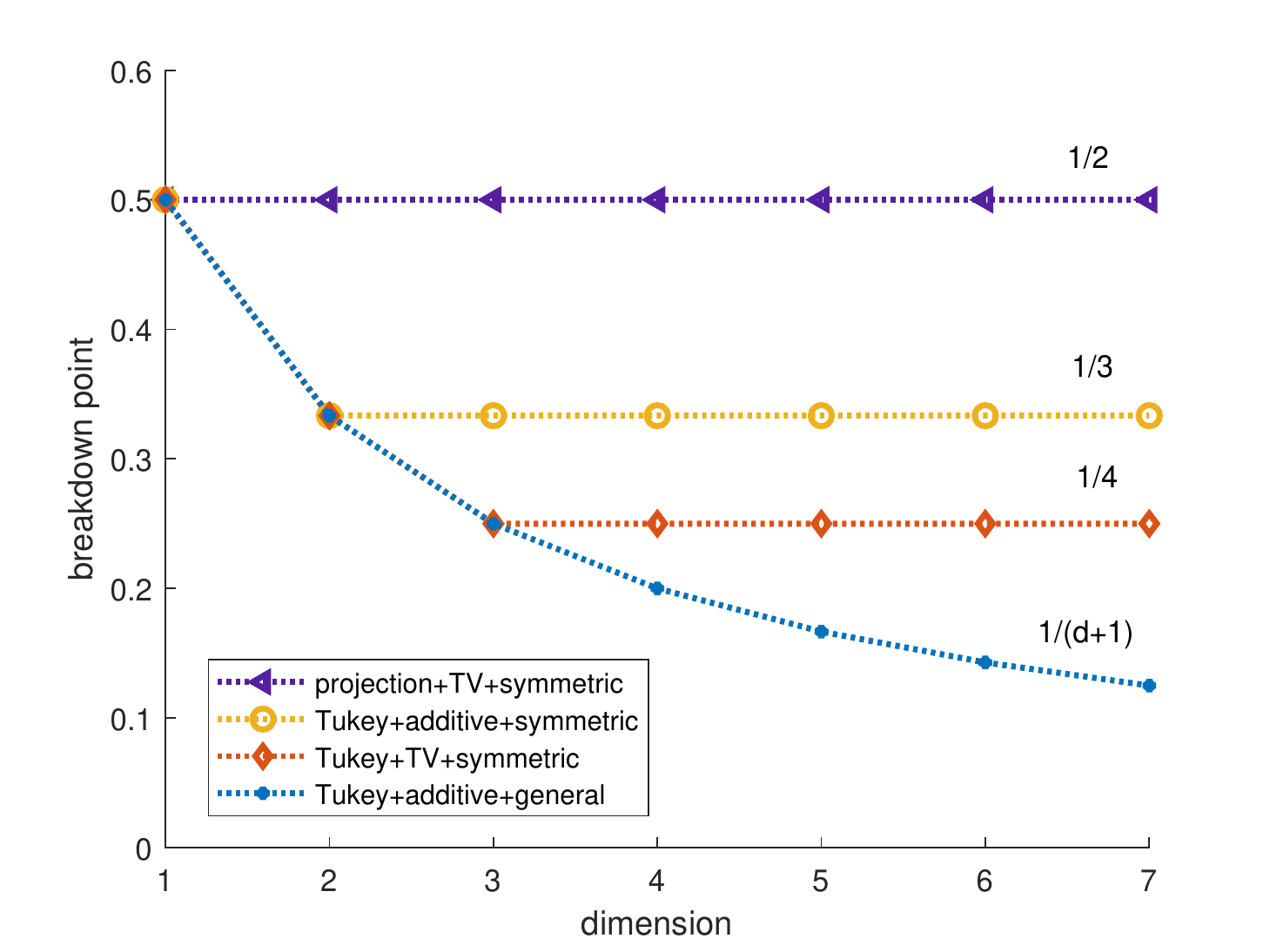}
    \caption{Summary of the breakdown point of different algorithms. Here `Tukey' denotes Tukey median and `projection' denotes the projection algorithm.  `Additive' and `TV' are the two corruption models. `Symmetric' denotes the family of halfspace-symmetric distribution, and `general' denotes the family of all distributions.} %  \js{don't use blue for both lines} \js{I'd use `symmetric' rather than `halfspace' to avoid collision with `halfspace metric'}} %\jt{Should put in the figure somewhere the numbers 1/2, 1/3, 1/4 and 1/d +1}}
    \label{fig:breakdown}
\end{figure}

\section{Preliminaries}
We provide definitions for the Tukey median,  halfspace-symmetric distributions,   the additive and $\TV$ corruption models, maximum bias, and breakdown point. 
\subsection{Tukey median}

% \jt{The table above is not exactly correct. Under additive it is still 1/2 for one d. It becomes 1/3 for d bigger than one. For TV for 1d it is still 1/2, for 2d it is 1/3, and for d bigger than 2 it is 1/4}

For any distribution $p$ and $\mu \in \bR^d$, the Tukey depth is defined as the minimum probability density on one side of a hyperplane through $\mu$: % \js{also explain in words}
\begin{align}
\label{eq.def_Tukey}
    D_{\mathsf{\mathsf{Tukey}}}(\mu, p) = \inf_{v \in \bR^d} p(v^\top(X-\mu)\geq 0).
\end{align}
The Tukey median of a distribution $p$ is defined as the point(s) with largest Tukey depth: % \js{also explain in words}
\begin{align}
    T(p) = \argmax_{\mu \in\bR^d} D_{\mathsf{\mathsf{Tukey}}}(\mu, p).
\end{align}
When $d=1$, Tukey median reduces to median. The Tukey median may not be unique even in one dimension. When the maximizer for Tukey depth is not unique, we use $T(p)$ to denote the set for all the maximizers and refer to this set as the Tukey median for distribution $p$.

\subsection{Halfspace-symmetric distributions}
We adopt the definition of halfspace-symmetric distributions from~\citep{zuo2000general, chen2002influence}. We say a distribution $p$ is halfspace-symmetric if there exists a point $\mu \in\bR^d$ such that for $X \sim p$,  $(X-\mu)$ and $-(X-\mu)$ are equal in distribution for all univariate projections, i.e. % \js{also explain in words}
\begin{align}
    \forall v \in\bR^d, v^\top (X-\mu) \buildrel d \over = -v^\top (X-\mu).
\end{align}
Here $\buildrel d \over =$ represents equal in distribution. We call the point $\mu$ the center of the distribution $p$.
The
class of halfspace-symmetric distributions contains both the class of centro-symmetric distributions in~\citet{donoho1992breakdown} and elliptical distributions in~\citet{chen2018robust}. %\jt{Do we really need this comment?} 
For a halfspace-symmetric distribution $p$, $\mu$ is the mean of $p$. The Tukey depth satisfies
$D_{\mathsf{Tukey}}(\mu, p)\geq 1/2$ and the Tukey median $T(p)$ contains $\mu$. % \js{re-worded for clarity; also, can it ever contain points other than $\mu$ or is it uniquely $\mu$?} BH: It can contain other points, like median for a distribution with no mass in the middle. 

\subsection{Population corruption models}
In the population level, 
we consider two corruption models: additive corruption model and $\TV$ corruption model~\cite{donoho1988automatic, diakonikolas2017being, zhu2019generalized}. % \js{confusing because you earlier talk about two corruption models as oblivious and adaptive} BH: Added population for clarity.

\paragraph{Additive corruption model}
In a level-$\epsilon$ additive corruption model, given some true distribution $p^*$,  the adversary can generate  corrupted distribution $p = (1-\epsilon)p^* + \epsilon r$, where $\epsilon \in[0, 1)$ is the level of corruption, and $r\in\mathbb{M}^d$ is an arbitrary distribution selected by adversary.  We denote the set for all possible $\epsilon$-additive corruptions from $p^*$ as 
\begin{align}
    \mathcal{C}_{\mathsf{add}}(p^*, \epsilon) = \{ (1-\epsilon)p^* + \epsilon r \mid r \in \mathbb{M}^d\}.
\end{align}

\paragraph{Total variation distance corruption model}
The total variation distance between  two distributions $p, q$ is defined as
\begin{align}
    \TV(p, q) = \sup_{A} p(A) - q(A).
\end{align}

In a level-$\epsilon$ $\TV$ corruption model, 
given some true distribution $p^*$, the adversary can generate any corrupted distribution $p$ with $\TV(p, p^*)\leq \epsilon$. For any $p\in  \mathcal{C}_{\mathsf{add}}(p^*, \epsilon)$, it is always true that $\TV(p^*, p)= \sup_{A} \epsilon (p^*(A)-r(A))\leq \epsilon$. Thus the $\TV$ corruption model is a stronger corruption model than the additive corruptions, since $\TV$ corruptions  allow not only additive corruption, but also deletion and replacement.

\subsection{Maximum bias and breakdown point for Tukey median}

Given a fixed distribution $p^*$, 
the maximum bias $b(p^*, \epsilon)$ for Tukey median is defined as the maximum distance %farthest estimator \js{didn't parse; what does ``farthest estimator'' mean?}
between $T(p)$  and $T(p^*)$, where $p$ is in the set of all possible level-$\epsilon$ corruptions: % \js{this is the maxbias of Tukey median specifically right? (since it could be defined for other estimators too)} BH: Yes. Rephrased. 
\begin{align}
    b_{\mathsf{add}}( p^*, \epsilon)& =  \sup_{p\in \mathcal{C}_{\mathsf{add}}(p^*, \epsilon), x\in T(p), y\in T(p^*)} \|x-y\|, \\
    b_{\TV}(p^*, \epsilon)& =  \sup_{\TV(p^*, p)\leq \epsilon, x\in T(p), y\in T(p^*)} \|x-y\|.
\end{align}
% Here we consider the maximum distance between two Tukey median \js{grammar} when the Tukey median for some distribution is not unique.
The corresponding breakdown point $\epsilon^*(p^*)$ is defined as the minimum corruption level that can drive the maximum bias to infinity:
\begin{align}
    \epsilon^*_{\mathsf{add}}(p^*) & = \inf \{\epsilon \mid  b(p^*, \epsilon) = \infty\},\\
    \epsilon^*_{\TV}(p^*) & = \inf \{\epsilon \mid b(p^*, \epsilon) = \infty\}.
\end{align}
Based on the definition of the breakdown point for a single distribution, 
we define the breakdown point for a family of distribution $\GG$ as the worst breakdown point for any distribution inside $\GG$, i.e.
\begin{align}
    \epsilon^*_{\mathsf{add}}(\GG) = \inf_{q \in \GG} \epsilon_{\mathsf{add}}^*(q), \quad  \epsilon^*_{\TV}(\GG) = \inf_{q \in \GG} \epsilon_{\TV}^*(q).
\end{align}

% Note that here for breakdown point we only consider the population distributions. 
%The definition can also be extended to finite sample cases.\jt{Do we need this comment about finite sample cases? }

% \banghua{There're still several defs that appear in the paper but there is no space to define: maximum bias, elliptical distributions etc. Shall we delete some content and include them? }

% \jt{Do we need these definitions later? }
\section{Population analysis of Tukey median}
In this section, we quantify the maximum bias and the breakdown point of the Tukey median in population level. % The result for additive corruption model is proven in~\citet{chen2002influence, chen2018robust}. 
The maximum bias of the Tukey median for halfspace-symmetric distributions under additive corruption model is determined in \cite[Theorem 3.4]{chen2002influence}, which shows that the worst-case perturbation is to add a single point with mass $\epsilon$.
It is also shown in \cite[Theorem 2.1]{chen2018robust} that under additive corruptions, the Tukey median achieves near optimal maximum bias for
mean estimation if the true distribution $p^*$ belongs to the family of elliptical distributions.

Here we demonstrate a gap in the breakdown point for halfspace-symmetric distributions between the additive and $\TV$ corruption models.  %We show in the next theorem that the lower bound is tight for both corruption models. % \js{What is $\epsilon^*$? I don't remember it being defined before. Even if it is defined, should remind reader what the English word for it is}
\begin{theorem}\label{thm.breakdown}
Denote $\GG_{\mathsf{half}}$ as the set of all halfspace-symmetric distributions. Then the breakdown point for $\GG_{\mathsf{half}}$ is
\begin{align*}
    \epsilon^*_{\mathsf{add}}(\GG_{\mathsf{half}}) = \begin{cases} 1/2, & d = 1 \\ 1/3, & d\geq 2
    \end{cases}, \quad
    \epsilon^*_{\TV}(\GG_{\mathsf{half}}) = \begin{cases} 1/2, & d = 1 \\ 1/3, & d = 2 \\ 1/4, & d \geq 3
    \end{cases}
\end{align*}

\end{theorem}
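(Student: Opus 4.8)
The plan is to prove matching lower and upper bounds, with the values $\max(1/4,1/(d+1))$ for $\TV$ and $\max(1/3,1/(d+1))$ for the additive model emerging from three elementary depth inequalities. Throughout let $\mu$ be the center of the halfspace-symmetric $p^*$. First I would establish, for every level-$\epsilon$ corruption $p$, the three facts: (a) a far-point bound $\limsup_{\|z\|\to\infty} D_{\mathsf{Tukey}}(z,p)\le\epsilon$, obtained by testing the single direction $v=(z-\mu)/\|z-\mu\|$ and using $p(A)\le p^*(A)+\epsilon$ (additive: $p(A)\le(1-\epsilon)p^*(A)+\epsilon$) together with tightness of $p^*$; (b) a center bound $D_{\mathsf{Tukey}}(\mu,p)\ge 1/2-\epsilon$ in the $\TV$ model and $\ge(1-\epsilon)/2$ in the additive model, both following from $D_{\mathsf{Tukey}}(\mu,p^*)\ge 1/2$; and (c) the classical centerpoint theorem $\max_m D_{\mathsf{Tukey}}(m,p)\ge 1/(d+1)$.

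For the lower bound I would argue that below the claimed threshold the Tukey median cannot escape to infinity. Fix $\epsilon$ strictly below $\max(1/4,1/(d+1))$ (resp. $\max(1/3,1/(d+1))$). By (b) and (c) the maximum depth $D^*(p)$ is at least $\max(1/2-\epsilon,1/(d+1))$ (resp. $\max((1-\epsilon)/2,1/(d+1))$), which is strictly larger than $\epsilon$. Using the quantitative form $D_{\mathsf{Tukey}}(z,p)\le\epsilon+p^*(v^\top(X-\mu)\ge \|z-\mu\|)$ and choosing, via tightness of the fixed measure $p^*$, a radius $R_0$ making the tail uniformly small, I get that every point outside $B(\mu,R_0)$ has depth strictly below $D^*(p)$; hence $T(p)\subseteq B(\mu,R_0)$ with $R_0$ depending only on $p^*$ and $\epsilon$, not on the particular corruption. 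This bounds the bias uniformly, so $\epsilon^*$ is at least the threshold, reproducing $1/2,1/3,1/4$ for $\TV$ and $1/2,1/3$ for additive in the stated dimension ranges.

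For the upper bound I would exhibit, for each $d$, a halfspace-symmetric $p^*$ and a level-threshold corruption whose Tukey median is unbounded. The construction is a simplex with one vertex sent to infinity: take $k=\min(d+1,4)$ (resp. $\min(d+1,3)$) equally weighted atoms at affinely independent points, one placed at $z=Re_1$, and let $R\to\infty$. The key supporting lemma is that for $k$ affinely independent atoms of mass $1/k$, every point of their convex hull has Tukey depth exactly $1/k$ (points off the spanned flat having depth $0$), so $T(p)$ equals the whole simplex and therefore contains $z$. It remains to choose the near atoms so that replacing $z$ by a finite point $b$ yields a centrally symmetric, hence halfspace-symmetric, $p^*$: for $\TV$ in $d=3$ take $p^*=\tfrac14(\delta_{e_2}+\delta_{-e_2}+\delta_{e_3}+\delta_{-e_3})$ and move the atom at $-e_3$ to $Re_1$, giving $\TV(p,p^*)=1/4$; embed this same configuration in a $3$-flat for all $d\ge3$, use $3$ and $2$ atoms (levels $1/3,1/2$) for $d=2,1$, and for the additive model replace deletion by mixing $(1-\epsilon)p^*$ with a far point mass.

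The hard part will be the supporting lemma together with its interaction with the symmetry constraint. Verifying that the \emph{entire} simplex, not just its centroid, attains the maximal depth $1/k$ is what makes the far vertex genuinely a Tukey median; this uses the hyperplane through each interior point parallel to a facet, and it fails for over-spread configurations, where the deepest point would stay bounded. Central symmetry of $p^*$ forces the near atoms into a low-dimensional flat, capping the number of usable affinely independent atoms, so once $d\ge3$ the construction cannot improve on $k=4$—matching the depth-balance obstruction from (a) and (b). I would also confirm that $T(p^*)$ is bounded in each case, so the bias really is the distance to a receding $T(p)$, and state (a)--(c) sharply enough that the ball $B(\mu,R_0)$ is uniform over the adversary's choices.
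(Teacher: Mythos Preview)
Your proposal is correct and follows essentially the same route as the paper: the lower bound combines the center-depth inequality $D_{\mathsf{Tukey}}(\mu,p)\ge 1/2-\epsilon$ (resp.\ $(1-\epsilon)/2$) with the centerpoint bound $1/(d+1)$, exactly as the paper does through Theorem~\ref{thm.population_tukey}, and the $\TV$ upper bound for $d\ge 3$ is the same four-atom tetrahedron construction. The only notable differences are cosmetic: you argue the lower bound directly via tightness rather than through the decay function $h$, and for the additive upper bound in $d\ge 2$ you use a two-atom symmetric $p^*$ mixed with a far point (yielding three equal atoms) instead of the paper's uniform-on-a-ball construction---your version is in fact a bit cleaner.
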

\begin{proof}[Proof of Theorem~\ref{thm.breakdown}]
We first show the upper bound for both breakdown points. We defer the lower bound to Theorem~\ref{thm.population_tukey}. The construction of upper bound is summarized in Figure~\ref{fig:breakdown}.

\begin{figure}[!htbp]
    \centering
    \includegraphics[width=0.95\linewidth]{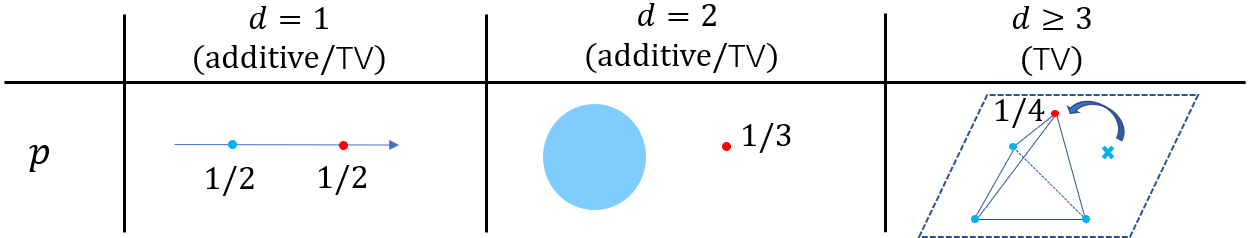}
    \caption{Illustration of worst case distributions achieving the breakdown point. Blue represents the original probability mass in $p^*$, blue cross represents deleted points and red represents added points by adversary. In all three cases, the red point is a Tukey median of $p$. Thus by driving the red point to infinity the estimator also goes to infinity. }
    \label{fig:tukey}
\end{figure}

For $d=1$, by adding $1/2$ mass onto $z$ and letting $z\rightarrow +\infty$, the maximum bias can be driven to infinity. Thus $\epsilon^*(\GG_{\mathsf{half}}) \leq 1/2$ under both corruption models. 

For $d\geq 2$  under additive corruption model, 
the upper bound of breakdown point $\epsilon^*_{\mathsf{add}}$ is proven in~\cite[Proposition 3.3]{donoho1992breakdown}. For completeness we sketch the proof here. Consider $p^*$ as a uniform distribution supported on unit ball. The adversary adds $1/3$ probability mass onto a point $\mu\in\bR^d$ outside the unit ball to get a new distribution $p$. Then $D_{\mathsf{Tukey}}(\mu, p) = 1/3$. On the other hand, for any point $\mu'\neq \mu$, if $\mu'$ is outside unit ball, there must exist a hyperplane which goes through $\mu'$ such that the unit ball is on one side of the hyperplane. Thus   $D_{\mathsf{Tukey}}(\mu', p)\leq 1/3$. If $\mu'$ is inside unit ball, consider any hyperplane that goes through $0$ and $\mu'$. The mass of the side of hyperplane which does not contain $\mu$ is $1/3$\footnote{If $\mu$ is on the same hyperplane. One can slightly rotate the hyperplane such that $\mu$ is not on it. This still guarantees the corresponding depth to be arbitrarily close to $1/3$.}. Thus  we also have $D_{\mathsf{Tukey}}(\mu', p) \leq 1/3$. Overall  $\mu$ must be one of the Tukey median for $p$. By setting $\mu \rightarrow \infty$ the proof is done. 
 %\js{grammar}. \js{Should do more to convey what construction looks like, since we want the paper to be a self-contained treatment.} % We briefly repeat the upper bound argument here: consider a 

Since $\TV$ corruption model is a stronger corruption model, the upper bound of breakdown points for $d=1,2$ under $\TV$ corruptions readily follows from that under additive corruptions. Now we show the upper bound for $\epsilon^*_{\TV}$ when $d\geq 3$. % \js{I think this is wrong reasoning. The theorem statement only references $1/4$ for $\TV$ so it's the only thing we need to show regardless of which model is stronger (and in any case being stronger points in the wrong direction)}. 

Consider the following example as illustrated in Figure~\ref{fig:tukey}: in a 3-dimensional space, $p^*$ is a distribution with equal probability on the four nodes of a 2-dimensional square. To be precise, $p^*(X = t) = 1/4$ for any $t \in \{(-1, -1, 0), (-1, 1, 0),(1, -1, 0),(1, 1, 0)\}$. Thus $p^*$ is a halfspace-symmetric distribution, and $T(p^*) = (0, 0, 0)$ gives a unique Tukey median for $p^*$. 

Now we move one of the point $(1, 1, 0)$ to $(-0.5, -0.5, z)$ to get corrupted distribution $p$, where $z >0$. Now the four points form a tetrahedron. For any point $\mu$ that is inside the tetrahedron, the Tukey depth $D_{\mathsf{Tukey}}$ is always $1/4$. For any point that is outside the tetrahedron, the Tukey depth is always $0$. Thus all the points inside the tetrahedron are a Tukey median for the corrupted distribution $p$. By taking $z \rightarrow +\infty$, the Tukey median $T(p)$ is driven to infinity.
Thus we know that $\epsilon_{\TV}^*(\GG_{\mathsf{half}})\leq 1/4$ when $d\geq 3$. 

\end{proof}

Without the halfspace-symmetric assumption, the breakdown point for Tukey median under additive corruption model is $1/(d+1)$,  which is shown in~\cite[Proposition 2.3]{donoho1992breakdown}. This is also true under $\TV$ corruption model. % \jt{I find this statement troublesome. For general distributions the breakdown point is not a fixed number but is in an interval. So not good to say it is $1/(d+1)$. Should change everywhere}

To better illustrate the behavior of Tukey median, we analyze the maximum bias for halfspace-symmetric distributions. The performance guarantee relies on the decay function of the distribution, which characterizes how much probability mass is around the center of distribution. Assume the true distribution $p^*$ is halfspace-symmetric centered at $\mu^*$. 
We define the decay function $h(t): \mathbb{R}_{\geq 0} \mapsto \mathbb{R}_{\geq 0}$ as 
\begin{align}\label{eqn.def_h}
    h(t) = \sup_{v\in\bR^d, \|v\|_*\leq 1}  p^*(v^\top (X - \mu^*) > t),
\end{align}
where $\|\cdot\|_*$ is the dual norm of $\|\cdot \|$. 
Note that $h$ is a non-increasing non-negative function and $h(0) = 1 - D_{\mathsf{Tukey}}(\mu^*, p^*) \leq 1/2$ for halfspace-symmetric distributions. 

In the next theorem, we show that the maximum bias is controlled if the distribution has enough mass around its center:
% \js{interpret in words before giving the math; otherwise too complicated. should first explain what $h$ is then give an overview of what the result says (perhaps focusing just on the $d \geq 3$ case). For instance it's hard to tell just from the expressions what the significance is, what is the regime we want the argument to $h^{-1}$ to be in to have good results?}

\begin{theorem}\label{thm.population_tukey}
Assume $p^*$ is halfspace-symmetric with center $\mu^*$ and decay function $h(t)$ defined in~\eqref{eqn.def_h}. Then the maximum bias satisfies:
\begin{align}\label{eqn.add_maximum_bias}
    b_{\mathsf{add}}(p^*, \epsilon)& \leq \begin{cases}
     h^{-1}\left (\max(\frac{(1-\epsilon)(1-h(0))-\epsilon}{(1-\epsilon)}, \frac{1/2-\epsilon}{1-\epsilon})\right ), & d = 1\\
     h^{-1}\left (\max(\frac{(1-\epsilon)(1-h(0))-\epsilon}{(1-\epsilon)}, \frac{1/3-\epsilon}{1-\epsilon})\right ), & d = 2 \\
     h^{-1}\left (\frac{(1-\epsilon)(1-h(0))-\epsilon}{(1-\epsilon)}\right ), & d\geq 3
    \end{cases}
   , \\ 
    b_{\TV}(p^*, \epsilon)&  \leq \begin{cases}
     h^{-1}\left (\max(1 - h(0)-2 \epsilon, 1/2-\epsilon)\right ), & d = 1\\
     h^{-1}\left (\max(1 - h(0)-2 \epsilon, 1/3-\epsilon)\right ), & d=2 \\
     h^{-1}\left (1 - h(0)-2 \epsilon\right ), & d\geq 3 \label{eqn.TV_maximum_bias}
    \end{cases}.
\end{align}
Here  $h^{-1}$ is the generalized inverse function of $h$ defined as
\begin{align}\label{eqn.def_inverse}
    h^{-1}(y) = 
    \inf\{x \mid h(x)< y\}.
\end{align}
\end{theorem}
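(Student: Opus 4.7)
The plan is to play the Tukey-median optimality condition $D_{\mathsf{Tukey}}(y,p) \geq D_{\mathsf{Tukey}}(\mu^*,p)$ (and, in low dimensions, a universal centerpoint lower bound) against a halfspace-based upper bound on $D_{\mathsf{Tukey}}(y,p)$ obtained by pointing a single hyperplane outward from $y$ back toward $\mu^*$. I first extract the symmetry consequence that for every $v$ with $\|v\|_* \leq 1$,
\begin{align*}
p^*\bigl(v^\top(X-\mu^*) \geq 0\bigr) \;=\; 1 - p^*\bigl(-v^\top(X-\mu^*) > 0\bigr) \;\geq\; 1 - h(0),
\end{align*}
so $D_{\mathsf{Tukey}}(\mu^*, p^*) \geq 1 - h(0)$. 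Pushing this into the corrupted law gives $D_{\mathsf{Tukey}}(\mu^*, p) \geq (1-\epsilon)(1-h(0))$ in the additive model (by linearity of the mixture) and $D_{\mathsf{Tukey}}(\mu^*, p) \geq 1 - h(0) - \epsilon$ in the $\TV$ model (since $\TV$ perturbs any single halfspace probability by at most $\epsilon$).

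For the upper bound, fix $y \in T(p)$, set $t = \|y-\mu^*\|$, and choose $v$ in the dual unit ball with $v^\top(y-\mu^*) = t$ (which exists by duality of norms in $\bR^d$). Then the halfspace $\{x : v^\top(x-y) \geq 0\}$ equals $\{x : v^\top(x-\mu^*) \geq t\}$, whose $p^*$-probability is at most $h(t')$ for every $t' < t$; the two corruption models then give $D_{\mathsf{Tukey}}(y, p) \leq (1-\epsilon) h(t') + \epsilon$ (additive) and $D_{\mathsf{Tukey}}(y, p) \leq h(t') + \epsilon$ ($\TV$). Combining this with the lower bound on $D_{\mathsf{Tukey}}(\mu^*, p)$ from the previous paragraph and solving for $h(t')$ produces the ``symmetry'' argument in each $\max$: $h(t') \geq \frac{(1-\epsilon)(1-h(0))-\epsilon}{1-\epsilon}$ for additive and $h(t') \geq 1 - h(0) - 2\epsilon$ for $\TV$. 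Since $h$ is non-increasing, inverting through the generalized inverse in~\eqref{eqn.def_inverse} gives $t' \leq h^{-1}(\cdot)$; letting $t' \uparrow t$ yields the stated bound on $\|y - \mu^*\|$.

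The low-dimensional refinements at $d = 1, 2$ acquire an additional constraint from the classical centerpoint theorem, which guarantees that every distribution (including $p$) admits a point of Tukey depth at least $1/(d+1)$, giving $1/2$ for $d=1$ and $1/3$ for $d=2$. Substituting $D_{\mathsf{Tukey}}(y, p) \geq 1/(d+1)$ into the same upper-bound inequalities produces the second argument in each $\max$, and the $\max$ simply records that $y$ must satisfy both necessary conditions simultaneously (using $h^{-1}(\max(a,b)) = \min(h^{-1}(a), h^{-1}(b))$ since $h^{-1}$ is non-increasing). I expect the main bookkeeping obstacle to be the strict-versus-non-strict inequality in the definition $h(t) = \sup_v p^*(v^\top(X-\mu^*) > t)$ and in the one-sided inverse~\eqref{eqn.def_inverse}; the $t' \uparrow t$ limit sketched above handles this cleanly. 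Beyond that, the argument requires no new machinery: it is a direct three-line comparison of depth upper and lower bounds.
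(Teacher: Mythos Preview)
Your proposal is correct and follows essentially the same approach as the paper: both arguments combine the optimality inequality $D_{\mathsf{Tukey}}(T(p),p)\ge D_{\mathsf{Tukey}}(\mu^*,p)$ (and the $1/(d+1)$ centerpoint bound in low dimension) with the one-halfspace upper bound obtained from the direction $\tilde v=\argmax_{\|v\|_*\le1}v^\top(T(p)-\mu^*)$, transferring between $p$ and $p^*$ via the additive/$\TV$ relations. The only cosmetic difference is that the paper packages the ``upper bound $\Rightarrow$ distance'' step as a separate lemma (if $D_{\mathsf{Tukey}}(T(p),p^*)\ge\alpha$ then $\|T(p)-\mu^*\|\le h^{-1}(\alpha)$), which it then reuses in the finite-sample theorem.
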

% \js{Is this tight and if so in what sense? Theorem statement doesn't indicate} BH: the tightness seems hard to argue in general (otherwise we can do equality for b). But for Gaussian it is tight, which is shown after the theorem. 

For Gaussian distribution with the operator norm of covariance bounded,  $\mu^*$ is the mean and $h(t) = 1/2 - \Theta(t)$ for $t$ small, and Theorem~\ref{thm.population_tukey} implies that Tukey median achieves the  maximum bias $O(\epsilon)$ for robust Gaussian mean estimation, which is known to be optimal up to constant factor. 

For any fixed distribution $p^*$, it suffices to have $t>0$ for $h^{-1}(t)$ to be finite.
Thus as a direct corollary of Theorem~\ref{thm.population_tukey}, it provides tight lower bound on the breakdown point of halfspace symmetric distributions in Theorem~\ref{thm.breakdown} via noting that $h(0) \leq 1/2$ for halfspace-symmetric distributions.   % \js{How? And what are the lower bounds? I don't see how you get this, should spell it out.}.

The results in Theorem~\ref{thm.population_tukey} can be extended beyond halfspace symmetric distributions. For any true distribution $p^*$, since the Tukey median of $p^*$ may not be unique, we define the new $h(t)$ as
   \begin{align}
        h(t) = \sup_{v\in\bR^d, \|v\|_*\leq 1, \mu^*\in T(p^*)}  p^*(v^\top (X - \mu^*) > t).
   \end{align}
   % \jt{I am confused. Even for half space symmetric, are you sure $h(0) = 0$? What if you  have some mass in the center? } BH: here h(0) is not 0 but the Tukey depth.
   Then following the same argument as the proof, the result in~\eqref{eqn.add_maximum_bias} and~\eqref{eqn.TV_maximum_bias} still hold.

\begin{proof}[Proof of Theorem \ref{thm.population_tukey}]
% \js{Proof of what?} 
We first show that it suffices to bound $D_{\mathsf{Tukey}}(T(p), p^*)$ via the following lemma:
\begin{lemma}\label{lem.lower_bound}
Under the same condition as Theorem~\ref{thm.population_tukey}, if $D_{\mathsf{Tukey}}(T(p), p^*)\geq \alpha$, we have
\begin{align}
    \|T(p) - \mu^*\| \leq h^{-1}(\alpha).
\end{align}
\end{lemma}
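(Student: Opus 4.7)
The plan is to reduce the lemma to a one-dimensional probability bound along a single direction, chosen via norm duality. Write $\mu = T(p)$; the case $\mu = \mu^*$ is trivial, so assume $\mu \neq \mu^*$. By the duality identity $\|w\| = \sup_{\|v\|_* \leq 1} v^\top w$, I pick $v \in \bR^d$ with $\|v\|_* \leq 1$ and $v^\top(\mu - \mu^*) = \|\mu - \mu^*\|$. Plugging this direction into the Tukey-depth hypothesis $D_{\mathsf{Tukey}}(\mu, p^*) \geq \alpha$ yields
\begin{align*}
\alpha \leq p^*\bigl(v^\top(X - \mu) \geq 0\bigr) = p^*\bigl(v^\top(X - \mu^*) \geq \|\mu - \mu^*\|\bigr).
\end{align*}

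The remaining step links this bound to $h$ and its generalized inverse. Since $h$ is defined with a strict event $\{v^\top(X - \mu^*) > t\}$ whereas the Tukey bound is non-strict, I would introduce a slack $\delta > 0$: the inclusion $\{v^\top(X - \mu^*) \geq \|\mu - \mu^*\|\} \subseteq \{v^\top(X - \mu^*) > \|\mu - \mu^*\| - \delta\}$ together with the above bound gives $h(\|\mu - \mu^*\| - \delta) \geq \alpha$ for every $\delta > 0$. Because $h$ is non-increasing, this forces $h(x) \geq \alpha$ for every $x < \|\mu - \mu^*\|$, so no such $x$ lies in $\{x : h(x) < \alpha\}$. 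Taking the infimum, $h^{-1}(\alpha) \geq \|\mu - \mu^*\|$, which is exactly the claim.

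The argument is quite short; the only subtle point is the strict-versus-non-strict mismatch between the Tukey-depth definition (involving $\geq 0$) and the decay-function definition (involving $> t$), and the one-sided nature of $h^{-1}$ as a generalized inverse. The $\delta$-perturbation above is the cleanest way to bridge them. I would also note in passing that halfspace symmetry of $p^*$ is not directly invoked in this lemma; symmetry is used elsewhere in the proof of Theorem~\ref{thm.population_tukey} (for example to ensure $\mu^* \in T(p^*)$ and to enforce $h(0) \leq 1/2$), but the reduction here relies only on the definitions of Tukey depth and of $h$ anchored at the reference point $\mu^*$.
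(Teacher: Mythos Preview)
Your proposal is correct and follows essentially the same route as the paper: both pick the dual direction $v$ attaining $\|T(p)-\mu^*\|$, bound the Tukey depth by $p^*(v^\top(X-\mu^*)\geq \|T(p)-\mu^*\|)$, and then compare with $h$. The only cosmetic difference is that the paper argues by contradiction (if $\|T(p)-\mu^*\|>t$ for some $t$ with $h(t)<\alpha$, the non-strict event is contained in the strict event $\{v^\top(X-\mu^*)>t\}$, yielding depth $<\alpha$), whereas you introduce a $\delta$-slack; these are two phrasings of the same inclusion argument, and your observation that halfspace symmetry is not needed for this particular lemma is also accurate.
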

% \jt{"To see this lemma" does not make sense. You may want to use indeed}
%\js{Is this a proof of the lemma or continuing the proof of the theorem?}
\begin{proof}[Proof of Lemma~\ref{lem.lower_bound}]
Let $\tilde{v} = \argmax_{\|v\|_* \leq 1} v^\top (T(p) - \mu^*)$. 
Indeed, for any $t$ such that $h(t) < \alpha$, if $\| T(p) -\mu^*\| >t$, we have
\begin{align}
 D_{\mathsf{\mathsf{Tukey}}}(T(p), p^*) 
\leq &{p^*}(\tilde{v}^\top(X - T(p)) \geq 0) \nonumber \\
= & {p^*}(\tilde{v}^\top(X - \mu^*) \geq  \| T(p) -\mu^*\|) \nonumber \\
\leq &  {p^*}(\tilde{v}^\top(X - \mu^*) >t ) \nonumber \\
 \leq   &  h(t) 
 <     \alpha,
\end{align}
resulting in a contradiction. Thus the lemma holds. 
\end{proof}
Now it suffices to lower bound $D_{\mathsf{Tukey}}(T(p), p^*)$ for different dimensions and different corruption models. 

Under the $\TV$ corruption model, from the definition of $D_{\mathsf{Tukey}}$ and $\TV$, we have for any $\mu\in\bR^d$,
\begin{align}
    & D_{\mathsf{\mathsf{Tukey}}}(\mu, p) - D_{\mathsf{\mathsf{Tukey}}}(\mu, p^*) \nonumber \\  
    = & \inf_{v\in\bR^d} p(v^\top (X - \mu)\geq 0) - \inf_{v\in\bR^d} p^*(v^\top (X - \mu)\geq 0) \nonumber \\
     \leq & \sup_{v\in\bR^d} p^*(v^\top (X - \mu)< 0) - p(v^\top (X - \mu)< 0) \nonumber \\ 
     \leq &  \TV(p, p^*)\leq \epsilon.
\end{align}
Under the additive corruption model, we have a tigher bound:  from the definition we know that $p(A) = (1-\epsilon)p^*(A) +\epsilon r(A)\leq  (1-\epsilon)p^*(A) +\epsilon$ for any event $A$. Thus 
\begin{align}
    D_{\mathsf{\mathsf{Tukey}}}(\mu, p) & = \inf_{v\in\bR^d} p(v^\top (X - \mu)\geq 0) \nonumber \\
    & \leq \inf_{v\in\bR^d} (1-\epsilon)p^*(v^\top (X - \mu)\geq 0) + \epsilon \nonumber \\
    & = (1-\epsilon) D_{\mathsf{\mathsf{Tukey}}}(\mu, p^*) + \epsilon.
\end{align}

When $d=1$, we know that $  D_{\mathsf{\mathsf{Tukey}}}(T(p), p)\geq 1/2 $ for any distribution $p$. Thus $ D_{\mathsf{\mathsf{Tukey}}}(T(p), p^*) \geq  \frac{D_{\mathsf{\mathsf{Tukey}}}(T(p), p) - \epsilon}{1-\epsilon}\geq \frac{1/2-\epsilon}{1-\epsilon}$ under additive corruption, $ D_{\mathsf{\mathsf{Tukey}}}(T(p), p^*) \geq  D_{\mathsf{\mathsf{Tukey}}}(T(p), p) - \epsilon\geq 1/2-\epsilon$ under $\TV$ corruption.

When $d=2$, we know that  $  D_{\mathsf{\mathsf{Tukey}}}(T(p), p)\geq 1/3 $ for any distribution $p$ from~\cite[Proposition 2.3]{donoho1992breakdown}.
Thus following the same argument as $d=1$, we have  $ D_{\mathsf{\mathsf{Tukey}}}(T(p), p^*) \geq   (1/3-\epsilon)/(1-\epsilon)$ under additive corruption, $ D_{\mathsf{\mathsf{Tukey}}}(T(p), p^*) \geq 1/3-\epsilon$ under $\TV$ corruption.

For arbitrary dimension under additive corruption model, we also have another lower bound:
\begin{align}
    D_{\mathsf{\mathsf{Tukey}}}(T(p), p^*)& \geq  (D_{\mathsf{\mathsf{Tukey}}}(T(p), p) - \epsilon)/(1-\epsilon) \nonumber \\
    & \geq (D_{\mathsf{\mathsf{Tukey}}}(\mu^*, p) - \epsilon)/(1-\epsilon) \nonumber \\ 
    & \geq ((1-\epsilon)D_{\mathsf{\mathsf{Tukey}}}(\mu^*, p^*) - \epsilon)/(1-\epsilon)\nonumber \\
    & = {((1-\epsilon)(1-h(0))-\epsilon)}/{(1-\epsilon)}.
\end{align}
Here we use that $p(A) = (1-\epsilon)p^*(A) +\epsilon r(A)\geq  (1-\epsilon)p^*(A) $ for any event $A$. For $\TV$ corruption model, we have
\begin{align}
    D_{\mathsf{\mathsf{Tukey}}}(T(p), p^*)& \geq  D_{\mathsf{\mathsf{Tukey}}}(T(p), p) - \epsilon  \geq D_{\mathsf{\mathsf{Tukey}}}(\mu^*, p) - \epsilon \nonumber \\ 
    & \geq D_{\mathsf{\mathsf{Tukey}}}(\mu^*, p^*) - 2\epsilon = 1 - h(0) - 2\epsilon. \nonumber 
\end{align}
Combining the lower bounds with Lemma~\ref{lem.lower_bound} gives the proof.
\end{proof}

\section{Finite sample analysis of Tukey median}

In this section, we extend the  population results in the previous section to finite-sample case. 

Given finite samples, there are two different corruption models: oblivious corruption and adaptive corruption~\citep{zhu2019generalized}. In the oblivious corruption, the adversary first picks a corrupted population distribution $p$ from $C_{\TV}(p^*, \epsilon)$, then we take $n$ samples from $p$. In the adaptive corruption, we first take $n$ samples from $p^*$, then the adversary samples $n'$ from some distribution that is stochastically dominated % \js{is the stochastically dominated part necessary here? I think we can just take $n'$ from the Binomial} \banghua{I think adversary shall be able to choose arbitrary corruption level he wants. }
by a binomial distribution $n'\sim \mathsf{B}(n, \epsilon)$ and replace $n'$  points in the samples by arbitrary points to get the corrupted empirical distribution $\hat p_n$. It is shown in~\citet{diakonikolas2019robust, zhu2019generalized} % \js{are these really the canonical references? And why do we cite ourselves since the result already appears in the other paper? our result is about generalizing to $W_c$} BH: for corruption model it seems that we do not have better canonical references that explicitly compare these? 
that adaptive corruption model is a stronger  corruption model than oblivious corruptions.  

% \js{Should first say what the goal is---we want to bound the maxbias in finite samples---and summarize in English what we accomplish}
Now we bound the maximum bias in the finite-sample case. We show that with $d/\epsilon^2$ samples, the estimation error can be of the same order as the population error in Theorem~\ref{thm.population_tukey}: % \js{here says adaptive but theorem says adaptive or oblivious} BH: made it consistent

%It is shown in \cite[Theorem 2.1]{chen2018robust} that the Tukey median achieves near optimal maximum bias and good finite sample rate under additive corruption model for 
%mean estimation if the true distribution $p^*$ belongs to the family of elliptical distributions. Here we mainly focus on  analyzing the performance for finite-sample $\TV$ corruption model for halfspace-symmetric distributions. 

\begin{theorem}\label{thm.finite_sample_tukey}
Assume the true distribution $p^*$ is halfspace-symmetric centered at $\mu^*$ with decay function $h(t)$ defined in~\eqref{eqn.def_h}.
Denote $\hat p_n$ as the corrupted empirical distribution under either  oblivious  or adaptive $\TV$ corruptions  of level $\epsilon$. 
When $d\geq 3$,   with probability at least $1-\delta$, there exists universal constant $C>0$ such that for any $\hat \mu \in T(\hat p_n)$ as the Tukey median of $\hat p_n$, 
\begin{align}
    \| \hat \mu - \mu^*\| \leq h^{-1}\left (1 - h(0)-2\tilde \epsilon\right )
\end{align}
when $2\tilde \epsilon< 1-h(0)$. Here $\tilde \epsilon = \epsilon + C\cdot \sqrt{\frac{d+1+\log(1/\delta)}{n}}$ %\js{is the $\sqrt{\log(1/2\delta)/2n}$ term needed given the $C \cdot \sqrt{(d+1+\log(1/\delta))/n}$ term?} BH: yes that's a great point! Thanks!
,  $h^{-1}$ is the generalized inverse function of $h$ defined in~\eqref{eqn.def_inverse}.
\end{theorem}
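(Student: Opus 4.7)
The plan is to reduce the finite-sample statement to the population argument of Theorem~\ref{thm.population_tukey} by controlling a halfspace pseudo-metric rather than the full $\TV$ distance. Define
\[
\tTV(p, q) \eqdef \sup_{H} |p(H) - q(H)|,
\]
where the supremum ranges over closed halfspaces of $\bR^d$. This is the right quantity because $D_{\mathsf{Tukey}}(\mu, \cdot)$ depends only on halfspace probabilities, so for every $\mu$ we have $|D_{\mathsf{Tukey}}(\mu, \hat p_n) - D_{\mathsf{Tukey}}(\mu, p^*)| \leq \tTV(\hat p_n, p^*)$, exactly as in the $\TV$ step of the population proof.

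The core ingredient is a VC uniform-convergence bound: closed halfspaces in $\bR^d$ form a class of VC dimension $d+1$, so for $n$ i.i.d.\ draws from any distribution $q$ with empirical measure $\hat q_n$, there is a universal constant $C$ with
\[
\bP\!\left(\tTV(\hat q_n, q) \leq C\sqrt{\tfrac{d+1+\log(1/\delta)}{n}}\right) \geq 1-\delta.
\]
In the oblivious model, $\hat p_n$ is the empirical measure of $n$ i.i.d.\ samples from some $p$ with $\TV(p, p^*) \leq \epsilon$; applying the VC bound to $(p, \hat p_n)$ and using $\tTV(p, p^*) \leq \TV(p, p^*) \leq \epsilon$ together with the triangle inequality for $\tTV$ yields $\tTV(\hat p_n, p^*) \leq \tilde \epsilon$. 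In the adaptive model, let $\hat p_n^{\mathrm{clean}}$ denote the empirical distribution of the clean samples and let $n'$ be the number of replaced points, which is stochastically dominated by $\Binom(n, \epsilon)$. Replacing $n'$ out of $n$ sample points perturbs the empirical mass of every set by at most $n'/n$, so $\tTV(\hat p_n, \hat p_n^{\mathrm{clean}}) \leq n'/n$, and a Chernoff bound gives $n'/n \leq \epsilon + C_1\sqrt{\log(1/\delta)/n}$ with probability at least $1-\delta/2$. A union bound with the VC inequality applied to the clean samples $(p^*, \hat p_n^{\mathrm{clean}})$ again delivers $\tTV(\hat p_n, p^*) \leq \tilde \epsilon$ after absorbing constants.

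Once $\tTV(\hat p_n, p^*) \leq \tilde \epsilon$ is established, the bound closes exactly as in the $d\geq 3$ branch of Theorem~\ref{thm.population_tukey}: for any $\hat \mu \in T(\hat p_n)$,
\[
D_{\mathsf{Tukey}}(\hat \mu, p^*) \geq D_{\mathsf{Tukey}}(\hat \mu, \hat p_n) - \tilde \epsilon \geq D_{\mathsf{Tukey}}(\mu^*, \hat p_n) - \tilde \epsilon \geq D_{\mathsf{Tukey}}(\mu^*, p^*) - 2\tilde \epsilon = 1 - h(0) - 2\tilde \epsilon,
\]
where the middle inequality is the definition of the Tukey median and the last equality uses halfspace symmetry. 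Lemma~\ref{lem.lower_bound} then converts this into $\|\hat \mu - \mu^*\| \leq h^{-1}(1 - h(0) - 2\tilde \epsilon)$, and the assumption $2\tilde \epsilon < 1 - h(0)$ ensures the right-hand side is finite.

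The main obstacle is to handle the adaptive corruption carefully: one must observe that the arbitrary replacement points enter the halfspace discrepancy only through their count $n'$, so VC uniform convergence applied to the clean i.i.d.\ sample suffices and no structural control over the outliers is required. Beyond this, the argument is a direct transcription of the population proof with $\epsilon$ replaced by $\tilde \epsilon$ throughout.
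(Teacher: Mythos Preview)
Your proposal is correct and follows essentially the same route as the paper: introduce the halfspace pseudo-metric $\tTV$, show $|D_{\mathsf{Tukey}}(\mu,\cdot)-D_{\mathsf{Tukey}}(\mu,\cdot)|\leq\tTV$, control $\tTV(\hat p_n,p^*)$ by splitting into a corruption term (bounded by $n'/n$ via stochastic domination) and a VC term on the clean empirical, then rerun the $d\geq3$ chain from Theorem~\ref{thm.population_tukey} and invoke Lemma~\ref{lem.lower_bound}. The only cosmetic difference is that the paper treats adaptive corruption directly (noting it dominates the oblivious case) and cites \cite[Lemma~B.1]{zhu2019generalized} in place of your Chernoff bound, but the argument is the same.
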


\begin{proof}
It suffices to show the result for adaptive corruption model. 
From Lemma~\ref{lem.lower_bound}, we know that it also suffices to lower bound $D_{\mathsf{Tukey}}(\hat \mu, p^*)$, where $\hat \mu\in T(\hat p_n)$.  

We introduce the halfspace metric defined in~\citet{donoho1988automatic} as 
\begin{align}\label{eqn.tTV_def}
    \tTV(p, q) = \sup_{v\in\bR^d, t\in\bR} | p(v^\top X \geq t) - q(v^\top X \geq t)|.
\end{align}
From the definition we have %\js{generally avoid $\forall$ symbol when possible because it is cognitively hard to read}
$\tTV(p, q)\leq \TV(p, q)$ for all $p,q$. %\js{just like sentences, write equations so that the most important parts appear first ($\tTV \leq \TV$ before ``for all $p,q$'')}. BH: Got it!  
We first show that 
$|D_{\mathsf{\mathsf{Tukey}}}(\mu,p) - D_{\mathsf{\mathsf{Tukey}}}(\mu,q)| \leq \tTV(p,q)$
for any two distributions $p, q$ and any $\mu\in\bR^d$. 
To see this, note that the left hand side is
\begin{align}
    &|D_{\mathsf{\mathsf{Tukey}}}(\mu,p) - D_{\mathsf{\mathsf{Tukey}}}(\mu,q)| \nonumber \\
     = & \inf_{v\in\bR^d} p(v^\top (X - \mu)\geq 0) - \inf_{v\in\bR^d} q(v^\top (X - \mu)\geq 0) \nonumber \\
     \leq & \sup_{v\in\bR^d} q(v^\top (X - \mu)< 0) - p(v^\top (X - \mu)< 0) \leq \tTV(p, q).\nonumber
\end{align}
For Tukey median $\hat \mu = T(\hat p_n) = \argmax_{\mu\in\bR^d} D_{\mathsf{\mathsf{Tukey}}}(\mu, \hat p_n)$, 
\begin{align}
    D_{\mathsf{\mathsf{Tukey}}}(\hat \mu, p^*)& \geq  D_{\mathsf{\mathsf{Tukey}}}(\hat \mu, \hat p_n) - \tTV(\hat p_n, p^*) \nonumber \\
    & \geq D_{\mathsf{\mathsf{Tukey}}}(\mu^*, \hat p_n) - \tTV(\hat p_n, p^*) \nonumber \\ 
    & \geq D_{\mathsf{\mathsf{Tukey}}}(\mu^*, p^*) - 2\tTV(\hat p_n, p^*). \nonumber
\end{align}
% \js{don't implicitly introduce new notation in the middle of a string of equations. I split this up to avoid this, you'll need to fix the stuff below this though} BH: Modified. Thanks! 
Now let $\hat{p}_n^*$ be the uncorrupted 
distribution, so that $\hat{p}_n$ is obtained 
from $\hat{p}_n^*$ by modifying part of samples as in adaptive corruption model. Then by triangle inequality of $\tTV$, 
\begin{align}
      D_{\mathsf{\mathsf{Tukey}}}(\hat \mu, p^*)
    & \geq D_{\mathsf{\mathsf{Tukey}}}(\mu^*, p^*) - 2\tTV(\hat p_n, \hat p_n^*) -  2\tTV(\hat p_n^*, p^*) \nonumber  \\
    & \geq 1 - h(0)-2\TV(\hat p_n, \hat p_n^*) -  2\tTV(\hat p_n^*, p^*).\nonumber
\end{align}
where we repeatedly use the fact that for any $p,q,\mu$, we have $|D_{\mathsf{\mathsf{Tukey}}}(\mu,p) - D_{\mathsf{\mathsf{Tukey}}}(\mu,q)| \leq \tTV(p,q)$. Here $\hat p_n \mid \hat{p}_n^*$ follows adaptive corruption model. 
Now we upper bound the two terms $\TV(\hat p_n, \hat p_n^*)$ and $ \tTV(\hat p_n^*, p^*)$. 
From \cite[Lemma B.1]{zhu2019generalized}, we know that 
%\jt{Should downplay these very standard lemmas below. Perhaps put in the appendix or just cite a book. }
 with probability at least $1-\delta$,
\begin{align}
    \TV(\hat p_n, \hat p^*_n)\leq (\sqrt{\epsilon} + \sqrt{\frac{\log(1/\delta)}{2n}})^2.
\end{align}

For the second term $\tTV(\hat p_n^*, p^*)$, from the VC inequality~\cite[Chap 2, Chapter 4.3]{devroye2012combinatorial} and the fact that the family of sets $\{\{x \mid v^\top x \geq t\} \mid \|v\|=1, t\in \bR, v\in \bR^d\}$ has VC dimension $d+1$, there exists  some universal constant $C^{\mathsf{vc}}$ such that with probability at least $1-\delta$:
\begin{align}\label{eqn.vc_inequality}
    \tTV(p^*,\hat{p}^*_n) &\leq C^{\mathsf{vc}} \cdot \sqrt{\frac{d+1+\log(1/\delta)}{n} }.
\end{align}

Denote $\tilde \epsilon = (\sqrt{\epsilon} + \sqrt{\frac{\log(1/\delta)}{2n}})^2+C^{\mathsf{vc}} \cdot \sqrt{\frac{d+1+\log(1/\delta)}{n} }$. 
Combining the two lemmata together, we know that with probability at least $1-2\delta$, 
$  D_{\mathsf{\mathsf{Tukey}}}(\hat \mu, p^*) \geq  1 - h(0) - 2\tilde \epsilon.$
The proof is completed by combining the result with Lemma~\ref{lem.lower_bound}. 
\end{proof}
As a direct corollary of the finite sample result, we can show that for Gaussian distribution the estimation error is $O(\epsilon)$ with sample complexity $O(d/\epsilon^2)$. 
We remark that with the same proof, the population results in Theorem~\ref{thm.population_tukey} for $d=1, 2$ and additive corruptions can all be extended to finite-sample results with sample complexity $O(d/\epsilon^2)$. %\js{awkward phrasing} \js{isn't this already theorem 1? I guess you mean the $d = 1,2$ cases? probably better to make it explicit that's what you mean}.  BH: Yes, rephrased. 
Similarly the halfspace-symmetric assumption can be discarded.

\section{$\tTV$ Projection Algorithm}

% \jt{I suggest for this section we only do population version to simplify it, since the current argument has too much overlap with the analysis in the previous page. }

In the previous two sections, %\js{previous two sections?}, 
we show that Tukey median can achieve breakdown point $1/4$ for halfspace symmetric distributions under $\TV$ corruptions and the sample complexity is linear in dimension. In this section, we show that projection under halfspace metric $\tTV$, as defined in~\eqref{eqn.tTV_def}, is able to improve the breakdown point to $1/2$ under the same conditions. The $\tTV$ projection algorithm is first proposed in~\citet{donoho1988automatic} for robust mean estimation, and later generalized in~\citet{zhu2019generalized} for general robust inference problems.  
 
Denote $\GG(h)$ as the set of halfspace-symmetric distributions with controlled cumulative density function around its center:
% \js{fixed this to be more readable} BH: Thanks!
% \js{as before, should try to avoid $\forall$ and $\exists$ as these hurt readability}
\begin{align}
    \GG(h) = \{p \mid & X\sim p \text{ is halfspace-symmetric around } \mu \text{ and } \nonumber \\
    %,\exists \mu\in\bR^d, \forall v \in\bR^d,  v^\top (X-\mu) \buildrel d \over = -v^\top (X-\mu), \nonumber \\
     & \sup_{v\in\bR^d, \|v\|_*\leq 1}  p(v^\top (X - \mu) > t) \leq h(t)\}.
\end{align}
The $\tTV$ projection algorithm %\js{don't say ``attempts to'' unless it could fail} BH: Got it ! Thanks!
projects the corrupted empirical distribution $p$ onto the set $\GG(h)$  under $\tTV$ distance, i.e.~the output is
\begin{align}\label{eqn.algorithm_tTV_proj}
    \hat \mu(p) = \bE_q[X], \text{ where } q  = \argmin_{q \in \GG(h)} \tTV(q, p).
\end{align}
Note that the $\tTV$ projection algorithm requires the knowledge of the set $\GG(h)$, while the Tukey median is agnostic to the distributional assumption on $p^*$. In return, the $\tTV$ projection algorithm achieves a breakdown point of $1/2$ and better maximum bias than the Tukey median, as shown 
%We analyze the performance of  $\tTV$ projection 
in the following theorem:
\begin{theorem}\label{thm.tilde_TV}

Assume the true distribution $p^*$ is halfspace-symmetric centered at $\mu^*$ with decay function $h(t)$ defined in~\eqref{eqn.def_h}. 
Then for any $p$ with $\TV(p^*, p) \leq \epsilon$, the projection estimator  $\hat \mu(p)$ in~\eqref{eqn.algorithm_tTV_proj} satisfies 
\begin{align}
    \| \hat \mu(p) - \mu^*\| \leq 2h^{-1}\left (1/2 - \epsilon \right )
\end{align}
when $\epsilon<1/2$. 
Here $h^{-1}$ is the generalized inverse function of $h$ defined in~\eqref{eqn.def_inverse}.
\end{theorem}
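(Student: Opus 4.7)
The plan is to project $p$ onto the reference class $\GG(h)$ to get a halfspace-symmetric $q$, control $\tTV(q,p^*)$ by the triangle inequality, and then exploit the common halfspace-symmetry and decay bound $h$ to convert this $\tTV$-gap into a bound on the distance between the centers of $q$ and $p^*$.

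First I would observe that by the definition of the decay function $h$ in~\eqref{eqn.def_h}, the true distribution $p^*$ itself lies in $\GG(h)$. Consequently the minimizer
\[
q \in \argmin_{q' \in \GG(h)} \tTV(q', p)
\]
(which, if not attained, is replaced by an $\eta$-approximate minimizer with $\eta \to 0$ at the end) satisfies $\tTV(q,p) \leq \tTV(p^*,p) \leq \TV(p^*,p) \leq \epsilon$, and the triangle inequality for $\tTV$ then yields $\tTV(q,p^*) \leq 2\epsilon$.

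Next, let $\hat\mu$ be the center of $q$; halfspace-symmetry forces $\bE_q[X] = \hat\mu$, so $\hat\mu$ coincides with the algorithm's output $\hat\mu(p)$. Set $t = \|\hat\mu - \mu^*\|/2$ and pick a dual vector $v \in \bR^d$ with $\|v\|_* \leq 1$ achieving $v^\top(\hat\mu - \mu^*) = 2t$. I would then examine the single halfspace
\[
A \;=\; \{x \in \bR^d : v^\top(x - \mu^*) \geq t\} \;=\; \{x \in \bR^d : v^\top(x - \hat\mu) \geq -t\}.
\]
The decay bound on $p^*$ gives $p^*(A) \leq h(t)$. On the other side, halfspace-symmetry of $q$ around $\hat\mu$ together with $q \in \GG(h)$ gives $q(v^\top(X-\hat\mu) < -t) = q(v^\top(X-\hat\mu) > t) \leq h(t)$, so $q(A) \geq 1 - h(t)$. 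Combining, $2\epsilon \geq \tTV(q,p^*) \geq q(A) - p^*(A) \geq 1 - 2h(t)$, i.e.\ $h(t) \geq 1/2 - \epsilon$. Monotonicity of $h$ and the definition $h^{-1}(y) = \inf\{x : h(x) < y\}$ then force $t \leq h^{-1}(1/2 - \epsilon)$, which is exactly the claimed bound.

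The only subtleties I expect are bookkeeping items: handling a possibly non-attained $\argmin$ via a minimizing sequence, and checking that the symmetry identity $q(v^\top(X-\hat\mu) < -t) = q(v^\top(X-\hat\mu) > t)$ correctly sidesteps any mass sitting on the boundary hyperplane. The substantive step is the symmetry estimate $q(A) \geq 1 - h(t)$: it lets the \emph{good} and the \emph{bad} halfspaces each contribute their full decay $h(t)$, and this doubling is precisely what pushes the tolerable corruption level up to $1/2$, compared with the $1/4$ attainable by the Tukey median.
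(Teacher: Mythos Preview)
Your proposal is correct and follows essentially the same approach as the paper: both bound $\tTV(q,p^*)\leq 2\epsilon$ via the triangle inequality and then compare $p^*$ and $q$ on the halfspace whose boundary passes through the midpoint $(\mu^*+\hat\mu)/2$, using the decay bound for $p^*$ on one side and halfspace symmetry of $q$ together with $q\in\GG(h)$ on the other. The only cosmetic difference is that the paper phrases the final step as a contradiction (assume $\|\hat\mu-\mu^*\|>2t$ for some $t$ with $h(t)<1/2-\epsilon$), which incidentally absorbs the strict-versus-nonstrict boundary issue you flag without any extra bookkeeping.
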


% \jt{Perhaps should also draw a picture to illustrate the important insight that $\tTV$ works because it can place a hyperplane in the middle of p and q, but Tukey can only consider hyperplanes passing through one center, hence achieving a poorer breakdown point. This intuition was missing in the current exposition. }

\begin{proof}
By triangle inequality and the property of projection, 
\begin{align}
    \tTV(p^*, q)& \leq \tTV(p^*, p) + \tTV(p, q) \nonumber   \\
    & \leq \tTV(p^*, p) + \tTV(p, p^*) \nonumber  \\
    & =2\tTV(p^*, p) \leq 2\TV(p^*, p)\leq 2\epsilon.
\end{align}

We also know that $p^*, q \in \GG(h)$.   Let $\tilde{v} = \argmax_{\|v\|_* \leq 1} v^\top ( \hat \mu - \mu^*)$. We have
\begin{align}
    & q(\tilde{v}^\top (X - \frac{\mu^* + \hat \mu}{2}) < 0) \nonumber \\ 
     = & q(\tilde{v}^\top (X -  \hat \mu) < - \frac{\|\hat \mu - \mu^*\|}{2}) \leq  h(\frac{\|\mu^* - \hat \mu\|}{2}). 
\end{align}

We show that it implies for any $\epsilon<1/2$,
$\|\hat \mu - \mu^*\|  \leq 2 h^{-1} (1/2-\epsilon)$.

For any $t$ such that $h(t) < 1/2 -\epsilon $, if $\| \hat \mu -\mu^*\| > 2t$,
\begin{align}
& p^*(\tilde{v}^\top (X - \frac{\mu^* + \hat \mu}{2}) < 0)=1-{p^*}(\tilde{v}^\top(X - \mu^*)\geq  \frac{\| \hat \mu -\mu^*\|}{2}) \nonumber \\
\geq & 1-{p^*}(\tilde{v}^\top(X - \mu^*)>  t) 
 \geq     1 -  h(t)
>    1/2 + \epsilon.
\end{align}

On the other hand, from $\tTV(p^*, q)\leq 2 \epsilon $, we know that
\begin{align}
     p^*(\tilde{v}^\top (X - \frac{\mu^* + \hat \mu}{2}) < 0) 
    \leq & q(\tilde{v}^\top (X - \frac{\mu^*+\hat \mu}{2}) < 0)  +  2\epsilon \nonumber \\
    \leq &  h(\frac{\|\mu^* - \hat \mu\|}{2}) + 2 \epsilon
    <  1/2+\epsilon, \nonumber 
\end{align}
resulting in a contradiction.

\end{proof}
The population result can also be extended to finite-sample case by projecting $\hat p_n$ instead of $p$ under $\tTV$.  The proof follows the same technique in Theorem~\ref{thm.finite_sample_tukey}. The key to the success of $\tTV$ projection is that it allows us to check the halfspace that goes through the middle of $\mu^*$ and $\hat \mu$, while Tukey median is only allowed to check the halfspace that goes through $\mu^*$ and $\hat \mu$. Although projection under $\TV$ would also give the same population rate, the finite sample error can be huge since $\TV(\hat p_n, p)=1$.
For both Theorem~\ref{thm.finite_sample_tukey} and~\ref{thm.tilde_TV}, the results can be extended to a more general perturbation model of corruptions under $\tTV$ distance. 

\section{Open Problem}

Considering the $\TV$ corruption model, Tukey median is an affine-equivariant estimator with breakdown point $1/4$ in high dimensions and good finite sample error for halfspace-symmetric distributions. The $\tTV$ projection algorithm is not affine-equivariant, but achieves breakdown point $1/2$ and good finite sample error in the same set of distributions. Both algorithms may not be efficiently solvable. 

It is an open problem to find an estimator that is affine-equivariant, with breakdown point $1/2$ and good finite sample error for halfspace-symmetric distributions without considering computational efficiency.

\bibliographystyle{plainnat}
\bibliography{di}
%%
%% where we here have assume the existence of the files
%% definitions.bib and bibliofile.bib.
%% BibTeX documentation can be obtained at:
%% http://www.ctan.org/tex-archive/biblio/bibtex/contrib/doc/
%%%%%%

\end{document}